\documentclass[12pt]{amsart}

\usepackage[utf8]{inputenc}

\usepackage{amsmath,amssymb,amsthm,amstext}
\usepackage{geometry}
\usepackage[
bookmarks=false,
breaklinks=true,
pdfborder={0 0 1},
colorlinks=false
]{hyperref}
\usepackage[numbers]{natbib}

\usepackage{slashed}
\usepackage[all]{xy}
  
\usepackage{accents}   
\numberwithin{equation}{section}

\theoremstyle{plain}
\newtheorem{thm}{Theorem}[section]
\newtheorem{lem}[thm]{Lemma}

\newtheorem{cor}[thm]{Corollary}

\theoremstyle{definition}
\newtheorem{defn}[thm]{Definition}

\theoremstyle{remark}

\theoremstyle{definition}
\newtheorem{ex}[thm]{Example}

\newcommand{\R}{\mathbb R}

\newcommand{\Sym}{\mathrm{Sym}}

\title[Admissibility and Topology]{Admissibility, Topology, and the \\ $\sigma_k$-Loewner--Nirenberg problem}

\author{Hao Fang}
\address{14 MacLean Hall, University of Iowa, Iowa City, IA 52242}
\email{hao-fang@uiowa.edu}

\author{Matthew J. Gursky}
\address{Department of Mathematics, University of Notre Dame}
\email{mgursky@nd.edu}
\begin{document}


\begin{abstract}  The $\sigma_k$-Loewner--Nirenberg problem is a fully nonlinear generalization of the classical Loewner--Nirenberg problem of constructing complete conformal metrics with constant negative scalar curvature in the interior of domains.  For the $\sigma_k$-version, once $k > 1$, one must impose a negative admissibility condition to ensure ellipticity of the equation.  In this note, we exhibit topological obstructions to admissibility and illustrate this with examples.   
\end{abstract}

\subjclass[2020]{Primary 53C21; Secondary 35J60, 57R70}

\keywords{$\sigma_k$-Loewner--Nirenberg problem, $\sigma_k$-Yamabe problem,
admissible metrics, Schouten tensor, Morse theory}

\maketitle

\section{Introduction}

In this article, we consider singular solutions of the $\sigma_k$-Yamabe problem and show that, under fairly mild conditions, there are topological obstructions to admissibility and hence to the existence of classical solutions.

To explain our results, let $(M,g)$ be a Riemannian manifold of dimension $n \geq 3$, and let $Ric = Ric_g$ and $R = R_g$ denote the Ricci tensor and the scalar curvature of $g$, respectively. The Schouten tensor $A = A_g$ of $g$ is defined by 
\begin{align} \label{Pdef} 
A = \frac{1}{n-2}\left( Ric - \frac{1}{2(n-1)}R g \right). 
\end{align}
We denote by $g^{-1} A$ the $(1,1)$-tensor obtained from $A$ by raising an index. At each point $p \in M$, the endomorphism $g^{-1}A$ of $T_p M$ has $n$ real eigenvalues $(\lambda_1(g^{-1} A),\dots,\lambda_n(g^{-1}A))$.  Let 
\begin{align} \label{skdef}
\sigma_k(g^{-1} A) = \sum_{i_1 < \cdots < i_k} \lambda_{i_1}(g^{-1} A) \cdots \lambda_{i_k}(g^{-1} A)
\end{align}
denote the $k^{\text{th}}$ elementary symmetric function of the eigenvalues of $g^{-1} A$. For example, when $k = 1$,
\[
\sigma_1(g^{-1} A) = \frac{1}{2(n-1)}R.
\]

If $g_u = e^{2u}g$ is a conformal deformation of $g$ and $A_u$ denotes the Schouten tensor with respect to $g_u$, then
\begin{align} \label{Achange}
A_u = A - \nabla^2 u + du \otimes du - \frac{1}{2} |du|^2 g. 
\end{align}
It follows that
\begin{align} \label{Agchange} 
g_u^{-1} A_u = e^{-2u} \, g^{-1} \left( A - \nabla^2 u + du \otimes du - \frac{1}{2} |du|^2 g \right). 
\end{align}

In \cite{jeffduke} Viaclovsky formulated the $\sigma_k$-Yamabe problem: given $(M,g)$ compact, find a conformal metric $g_u$ such that 
\begin{align} \label{skYgen}
\sigma_k\left( g_u^{-1} A_u \right) = \mathrm{constant}. 
\end{align}
Since this equation is fully nonlinear in $u$, some condition on $g_u^{-1}A_u$  is needed to obtain ellipticity. To this end, we introduce the G{\aa}rding cones \cite{Garding1959},
\begin{align} \label{Gammak}
\Gamma_k^{+} = \{ \lambda = (\lambda_1, \dots, \lambda_n) \in \mathbb{R}^n \, : \, \sigma_j(\lambda) > 0 \text{ for all } 1 \leq j \leq k \}. 
\end{align}
These cones are open, convex, and satisfy the nested property
\[
\Gamma_n^{+} \subset \cdots \subset \Gamma_1^{+}.
\]

We say that $u \in C^{\infty}(M)$ is {\em admissible} (or {\em $k$-admissible}) if 
\[
\lambda(g_u^{-1} A_u) = (\lambda_1(g_u^{-1} A_u),\dots,\lambda_n(g_u^{-1} A_u)) \in \Gamma_k^{+}
\]
at each point of $M$. As shown in \cite{jeffduke}, the equation \eqref{skYgen} is elliptic provided $u$ is admissible. It is also elliptic for {\em negatively admissible} functions, i.e.\ for $u \in C^{\infty}(M)$ satisfying 
\[
-\lambda(g_u^{-1} A_u) \in \Gamma_k^{+}.
\]
In this case, we define $\Gamma_k^{-} = -\Gamma_k^{+}$ and write $\lambda(g_u^{-1} A_u) \in \Gamma_k^{-}$.  

More generally, we say that a metric $g$ is admissible (resp.\ negatively admissible) and write $g \in \Gamma_k^{+}(M)$ (resp.\ $g \in \Gamma_k^{-}(M)$) if $\lambda(g^{-1} A_g) \in \Gamma_k^{+}$ (resp.\ $\lambda(g^{-1} A_g) \in \Gamma_k^{-}$) at each point of $M$.

\medskip 

\subsection{The singular Loewner--Nirenberg problem}  We will primarily be interested in solutions of \eqref{skYgen} that are singular on a prescribed set.  To explain our first result, suppose that $(M,g)$ is a compact manifold with boundary. When $k = 1$, the problem of finding a complete conformal metric with constant negative scalar curvature in the interior of $M$ is a special case of the {\em Loewner--Nirenberg problem} (or {\em singular Yamabe problem}), for which solutions are known to exist; see \cite{LN,AM}.  

For $k \geq 2$, the {\em $\sigma_k$-Loewner--Nirenberg problem} seeks a solution $u$ of the problem
\begin{align} \label{skLN} 
\begin{cases} & \sigma_k\left( - g_u^{-1} A_u \right) =  2^{-k} \binom{n}{k} \ \ \mbox{in }M \setminus \partial M, \\
& \lambda(g_u^{-1} A_u) \in \Gamma_k^{-} \ \ \mbox{in } M \setminus \partial M, \\
& u(x) \rightarrow \infty \ \ \mbox{as } x \to \partial M. 
\end{cases}
\end{align} 
We note that the constant on the right is chosen to agree with the value in the model case when $M = B_1(0) \subset \mathbb{R}^n$, and $g_u$ is the standard hyperbolic metric.  

If $M = \Omega \subset \mathbb{R}^n$ is a smooth bounded domain and $g = ds^2$, the Euclidean metric, Gonzalez-Li-Nguyen \cite{GonzalezLiNguyen2018LN} proved the existence and uniqueness of a continuous viscosity solution of (\ref{skLN}).  Subsequently, Li-Nguyen-Wang \cite{LiNguyenWang2018Comparison} showed that viscosity solutions are locally Lipschitz. Li-Nguyen-Xiong \cite{LiNguyenXiong2023Regularity} showed that the solutions $u$ are actually smooth near the boundary, and moreover $u+\log d$ is $C^{n-1,\alpha}$ up to the boundary, where $d$ is the distance to $\partial \Omega$.    

Interestingly, these regularity results are optimal. Li and Nguyen \cite{LiNguyen2021Annuli} showed that the unique viscosity solution of (\ref{skLN}) on an annulus $A \subset \mathbb{R}^n$ is not $C^1$.  This was later strengthened in the paper of Li-Nguyen-Xiong cited above:  in fact, as long as $\partial \Omega$ is disconnected, then the solution cannot be in $C^1$.  

More generally, if $M$ is a compact manifold, possibly with boundary, and if $\partial M\subset\Sigma \subset M$ is closed, one can try to solve (\ref{skLN}) on $\Omega = M \setminus \Sigma$, where the solutions blow up near the singular set $\Sigma$, that is, $u(x) \to \infty$ as $x \to \Sigma$.  For $\Omega \subset \mathbb{R}^n$, Gonzalez-Li-Nguyen studied this problem via exhaustion of $\Omega$ by bounded open sets $\{ \Omega_i \}$.  If $u_i$ is the unique viscosity solution on $\Omega_i$, they showed that $u_{\Omega} = \lim_i u_i$ exists and gave criteria for the blow up on $\Sigma.$
 
Many of the preceding results are special to Euclidean space.   If $(M,g)$ is a compact Riemannian manifold with boundary, then, by the work of Duncan and Nguyen (\cite{DuncanNguyen2026Critical}, \cite{DuncanNguyen2025Subcritical}) viscosity solutions are known to exist for $k \leq n/2$.  The case $k > n/2$ remains open in general. 

In this paper, we show that there are topological obstructions to the existence of negatively admissible functions under a mild  nondegeneracy condition, which we now define:  

\begin{defn}\label{nondegenerate}  Let $M$ be a compact manifold, possibly with boundary.  Let \(\partial M\subset\Sigma\subset M\) be a nonempty compact subset, and denote $\Omega = M \setminus \Sigma$.  

We say that \(u\in C^{\infty}(\Omega)\) is {\em  nondegenerate} on $\Omega$ if the following hold: \vskip.1in 

$(i)$ $u(x)\to+\infty \ \mbox{as } x \to \Sigma.$ \vskip.1in 

$(ii)$ There exists \(\varepsilon_0>0\) such that
\[
|\nabla_g u|_g>0 \ \text{on }
\{x\in\Omega:0<d_g(x,\Sigma)<\varepsilon_0\}.
\]
\end{defn}
 
To simplify the exposition, we state our main result in the special case where the singular set $\Sigma = \partial M$. A much more general result is stated in Section \ref{AdmissibleSec}; see Theorem \ref{thm:vanishing-negative}.  

\begin{thm}   \label{Thm1}  Let $M$ be a compact, connected manifold with boundary.  Let $g$ be a Riemannian metric on $M$ with $\lambda(g^{-1}A_g)\in \overline{\Gamma_{k}^{+}}$.  Denote $\Omega = M \setminus \partial M.$

Suppose \(u\in C^{\infty}(\Omega)\) satisfies \vskip.1in 

$(i)$  $\lambda(g_u^{-1} A_u) \in \Gamma_k^{-}$ at each point, and \vskip.1in 

$(ii)$ $u$ is  nondegenerate on $\Omega$.  \vskip.2in 

Then
\[
H_{n-k+1}(\Omega;\mathbb Z)=\cdots=
H_{n-1}(\Omega;\mathbb Z)=0 .
\]
\end{thm}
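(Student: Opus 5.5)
The plan is to use Morse theory on $u$ itself. The key observation is that admissibility controls the Morse index of $u$ at its critical points.

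\textbf{Step 1: index bound.} By \eqref{Achange},
\[
\nabla^2 u - du\otimes du + \tfrac12 |du|^2 g = A_g - A_u .
\]
By hypothesis $\lambda(g^{-1}A_g)\in\overline{\Gamma_k^+}$. Since $g_u$ is conformal to $g$, $\lambda(g^{-1}A_u)$ is a positive multiple of $\lambda(g_u^{-1}A_u)\in\Gamma_k^-$, so $-\lambda(g^{-1}A_u)\in\Gamma_k^+$.

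Eigenvalues of a sum of symmetric matrices are not the sums of the eigenvalues. So I would use the matrix form of the cone: $\{S\in \Sym : \lambda(S)\in \Gamma_k^+\}$ is an open convex cone, and adding an element of its closure keeps one inside it. This follows from convexity plus the cone property.

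Hence at every point of $\Omega$ the right-hand side has eigenvalues in $\Gamma_k^+$. At a critical point $p$ of $u$ the left-hand side is just $\nabla^2u(p)$, so $\lambda(\nabla^2 u(p))\in\Gamma_k^+$.

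A vector in $\Gamma_k^+$ has at least $k$ positive components. This is a standard fact: $\Gamma_k^+$ is contained in the cone where $\sigma_{k-1}$ of every $(n-1)$-subvector is positive, and one argues by induction. Therefore every critical point of $u$ has index at most $n-k$.

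\textbf{Step 2: deformation retraction.} I would use the nondegeneracy hypothesis to show that $\Omega$ deformation retracts onto a compact sublevel set.

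Since $u\to+\infty$ at $\Sigma=\partial M$ and $M$ is compact, $u$ is bounded below on $\Omega$. Moreover, for $c$ large the set $\{u\ge c\}$ lies in the collar $\{0<d_g(x,\Sigma)<\varepsilon_0\}$, where $|\nabla u|>0$.

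Fix a regular value $c$ this large. Then $\Omega_c=\{u\le c\}$ is a compact manifold with boundary $\{u=c\}$. The flow of $-\nabla u/|\nabla u|^2$, run on $\{u\ge c\}$, pushes $\Omega$ onto $\Omega_c$. It is complete in the needed direction because $u$ is proper on $\Omega$ and has no critical points there. Hence $H_*(\Omega)\cong H_*(\Omega_c)$.

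\textbf{Step 3: Morse perturbation and handles.} The function $u$ need not be Morse, so I would perturb it. Set $\tilde u = u + \delta h$, where $h$ is supported in a compact neighborhood $K$ of $\{u\le c+1\}$ and is chosen so that $\tilde u$ is Morse on $\Omega_{c}$; this is possible by genericity of Morse functions.

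Fix $\delta$ small. Then $|\nabla\tilde u|>0$ near $\{u=c\}$, and the openness of the matrix cone on the compact set $K$ shows that $\nabla^2\tilde u - d\tilde u\otimes d\tilde u+\frac12|d\tilde u|^2g$ still has eigenvalues in $\Gamma_k^+$. Step 1 then applies to $\tilde u$, so every critical point of $\tilde u$ in $\Omega_c$ has index $\le n-k$.

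By the fundamental theorem of Morse theory, $\Omega_c$ has the homotopy type of a CW complex with cells only of dimension $\le n-k$. Therefore $H_j(\Omega;\mathbb Z)=0$ for $j\ge n-k+1$, which includes the stated range $n-k+1\le j\le n-1$.

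\textbf{Main obstacle.} I expect the delicate points to be the cone argument in Step 1 and the perturbation control in Step 3. In Step 1 one must pass from eigenvalue conditions on $A_g$ and $A_u$ separately to an eigenvalue condition on their difference, which needs the convexity of the matrix cone rather than componentwise addition. In Step 3 one needs uniform margins on the compact set $K$ so that the perturbation preserves both the cone condition and the absence of critical points near the level $\{u=c\}$.
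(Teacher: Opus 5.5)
Your proposal is correct and follows essentially the same route as the paper. It uses the index bound at critical points from convexity of the matrix G{\aa}rding cone together with $\lambda_k>0$ on $\Gamma_k^+$ (Lemma~\ref{lem:morse-index-small}), a cut-off Morse perturbation on a compact sublevel set that preserves the open cone condition (Lemma~\ref{lem:morse-approx-negative}), and the Morse-theoretic CW decomposition (Theorem~\ref{thm:homology-vanishing-morse}). One small packaging point: your $h$ need not vanish near $\{u=c\}$, so $\Omega_c$ is in general not a sublevel set of $\tilde u$; it is cleaner to shrink $\delta$ so that $\tilde u$ has no critical points on the compact set $K\cap\{u\ge c\}$, observe that $\tilde u$ is then a proper Morse exhaustion of $\Omega$, and apply Morse theory on $\Omega$ directly, as the paper does, which also makes your Step~2 unnecessary.
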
  

\medskip 

The proof is based on the simple observation that $g^{-1}A_g\in\overline{\Gamma^+_k}$ and $g_u^{-1}A_u \in \Gamma_k^{-}$ at each point 
 impose constraints on the Morse index of any critical point.  The  nondegeneracy condition ensures that all critical points of $u$ are contained in some compact $K \Subset M \setminus \Sigma$.   We remark that if one could prove that solutions were sufficiently regular in a neighborhood of the boundary, as Li-Nguyen-Xiong did for the case of domains in Euclidean space, then the  nondegeneracy assumption could be dropped.   In any case, it is a property that is reasonable to expect (though difficult to establish) for the actual solutions of the PDE.

We illustrate Theorem \ref{Thm1} with two simple examples: 

\medskip 

\begin{ex}
\label{ex:equatorial-complement}
Fix \(p\ge 1\), $n \geq p+2$.  Let $M=\mathbb S^n \setminus \operatorname{int} ( \mathbb S^p \times D^{n-p}),$ where \(\mathbb S^p\times D^{\,n-p}\) is a sufficiently small tubular
neighborhood of a standard equatorial sphere
\(\mathbb S^p\subset\mathbb S^n\).  We equip $M$ with the round metric.  Homotopically, $\Omega = M \setminus \partial M \simeq \mathbb{S}^{n-p-1},$ hence 
\[
H_{n-p-1}(\Omega;\mathbb Z)
\cong
\mathbb Z.
\]
The round metric on \(\mathbb S^n\) belongs to \(\Gamma_k^+\) for every
\(k\). Consequently,
there is no negatively admissible,  nondegenerate
\(u\in C^\infty(\Omega)\) when $k>p+1$. In particular, \(\Omega\) does not admit a
 nondegenerate solution of the \(\sigma_k\)-Loewner--Nirenberg problem once $k > p+1$.  
\end{ex}

\begin{ex}
\label{ex:Sp-products-minus-ball}
For \(p\ge 2\), \(q\ge 2\), let 
\begin{align*}
N = \underbrace{\mathbb S^p \times \cdots \times\mathbb S^p}_{\text{$q$ times}},
\qquad
n=pq,
\end{align*}
and equip \(N\) with the product round metric.  Let $B \subset N$ be a small open geodesic ball, and $M = N \setminus B$.  As before, $\Omega = M \setminus \partial M$.

Since removing a ball does not affect homology below degree \(n-1\),
\[
H_{n-p}(\Omega;\mathbb Z)
\cong
H_{n-p}(N;\mathbb Z).
\]
By the K\"unneth theorem,
\[
H_{n-p}(N;\mathbb Z)
=
H_{p(q-1)}\bigl(\mathbb S^p \times \cdots \times \mathbb{S}^p;\mathbb Z\bigr)
\cong
\mathbb Z^q.
\]
Again, we find that $\Omega$ cannot admit a  nondegenerate, negatively admissible solution of the $\sigma_k$-Loewner--Nirenberg problem if $k>p$.
\end{ex}

\bigskip

Our construction applies more generally when the singular set
\(\Sigma\subset M\) is a closed embedded submanifold of codimension at
least one. From a topological point of view, deleting \(\Sigma\) is equivalent to
deleting a sufficiently small tubular neighborhood of \(\Sigma\).  For example, if
\[
\Sigma=\mathbb S^p\subset\mathbb S^n
\]
is the standard equatorial sphere, then its normal bundle is trivial,
and one may take
\[
N\cong \mathbb S^p\times D^{\,n-p}.
\]
Hence
\[
\mathbb S^n\setminus\mathbb S^p
\simeq
\mathbb S^n\setminus
\operatorname{int}\bigl(\mathbb S^p\times D^{\,n-p}\bigr).
\]

Similarly, deleting a point \(x_0\in M^n\) is homotopy equivalent to
deleting a small open \(n\)-ball:
\[
M\setminus\{x_0\}
\simeq
M\setminus\operatorname{int}D^n.
\]
Therefore the high-codimension examples may be formulated either in
terms of punctured manifolds \(M\setminus\Sigma\) or in terms of compact
manifolds with boundary obtained by deleting tubular neighborhoods of
the singular set.

\medskip

\subsection*{Acknowledgements.}  The second author is partially supported by the Simons Foundation Travel Support
for Mathematicians program, Award ID: SFI-MPS-TSM-00014122.

\section{Topological Results}\label{topology}

In this section we collect the basic Morse-theoretic tools and topological results needed
for the proof of our main results. The key point is that a bound
on the Morse indices of critical points leads to strong restrictions
on the topology of the underlying manifold.

We begin with a standard density result for Morse functions;
see \cite[Proposition~1.2.4]{AD14}.

\begin{thm}
\label{dense}
Let $M$ be a smooth manifold and let $K\subset M$ be compact.
Let $m\in\mathbb N$, and let $f\in C^{\infty}(M)$. Then for every
$\epsilon>0$, there exists a Morse function
$h\in C^{\infty}(M)$ such that
\[
\|f-h\|_{C^{m}(K)}<\epsilon .
\]
\end{thm}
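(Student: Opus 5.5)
This is the standard density of Morse functions, quoted from \cite[Proposition~1.2.4]{AD14}. My plan is to reproduce the usual transversality argument, localized near $K$.

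\emph{Step 1: the compact case.} First fix a closed embedding $\iota=(x_1,\dots,x_N)\colon M\to\R^N$ (Whitney), with coordinate functions $x_i\in C^\infty(M)$. For $a\in\R^N$ set
\[
f_a=f+\sum_{i=1}^N a_i x_i .
\]
The key claim is that $f_a$ is Morse for almost every $a$. To see this, consider the map $F\colon M\times\R^N\to T^*M$ given by $F(p,a)=d(f_a)_p$. Since the differentials $dx_i$ span $T_p^*M$ at every $p$ (because $\iota$ is an immersion), $F$ is a submersion onto $T^*M$. In particular $F$ is transverse to the zero section. By the parametric transversality theorem (Sard applied to the projection $F^{-1}(0)\to\R^N$), for almost every $a$ the section $df_a$ meets the zero section transversally. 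That transversality is exactly nondegeneracy of the Hessian at every critical point, so $f_a$ is Morse.

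\emph{Step 2: the $C^m$ estimate on $K$.} On the compact set $K$ the functions $x_i$ have finite $C^m(K)$ norms $C_i$. Hence
\[
\|f-f_a\|_{C^m(K)}\le |a|\sum_i C_i ,
\]
and choosing a regular value $a$ with $|a|$ small makes this less than $\epsilon$. Note that $f_a$ is Morse on all of $M$, not only on $K$, because the transversality statement in Step 1 was global. So no cutoff is needed and $h=f_a$ works.

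\emph{Where care is needed.} The only delicate point is the surjectivity of $F$ onto the fibers of $T^*M$, together with the identification of transversality to the zero section with the Morse condition. In local coordinates the vertical derivative of $F$ in the $a$-directions is $a\mapsto\sum a_i\,dx_i(p)$, which is onto $T_p^*M$. Transversality at a zero of $df_a$ then means that the derivative of $p\mapsto\partial f_a/\partial y^j(p)$ is invertible, which is exactly the statement that the Hessian of $f_a$ is nondegenerate.

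If $M$ has boundary, as it does in the intended application, I would apply the argument on the interior, or on any open manifold containing $K$. In the paper's use, $K\Subset\Omega$, so this causes no difficulty.
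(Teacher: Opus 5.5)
Your proof is correct. Since the $dx_i$ span $T_p^*M$, the map $F(p,a)=d(f_a)_p$ is a submersion onto $T^*M$, so parametric transversality (Sard applied to the projection $F^{-1}(0)\to\R^N$) makes $f_a$ Morse on all of $M$ for almost every $a$. Taking $|a|$ small then gives $\|f-f_a\|_{C^m(K)}<\epsilon$.

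The paper gives no argument of its own here; it only cites \cite[Proposition~1.2.4]{AD14}, and you have written out a complete version of that standard genericity argument.
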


The next two lemmas will be used to obtain an estimate of the Morse index for admissible functions.  The first result can be found in \cite{CNS1985}, Section 1 (see (1.2) and Proposition 1.1):  

\begin{lem} \label{CNSlemma}  If $\lambda=(\lambda_{1},\cdots,\lambda_{n})\in\Gamma_{k}^{+}$,
and $\lambda_{1}\geq\lambda_{2}\geq\cdots\geq\lambda_{n}$, then $\lambda_{k}>0$.
\end{lem}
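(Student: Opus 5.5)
Since $\Gamma_k^+\subset\Gamma_1^+$, the coordinate sum $\sigma_1(\lambda)=\lambda_1+\cdots+\lambda_n$ is positive, so $\lambda_1>0$. This already settles $k=1$; for general $k$ I would induct on $k$, using the derivative (interlacing) structure of the elementary symmetric functions. Write $\sigma_j(\lambda\,|\,i)$ for the $j$-th elementary symmetric function of $\lambda$ with $\lambda_i$ removed, so that
\[
\sigma_j(\lambda)=\sigma_j(\lambda\,|\,i)+\lambda_i\,\sigma_{j-1}(\lambda\,|\,i),\qquad \frac{\partial\sigma_j}{\partial\lambda_i}=\sigma_{j-1}(\lambda\,|\,i).
\]

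The key intermediate claim, which I take from \cite{CNS1985}, is this: if $\lambda\in\Gamma_k^+$, then $(\lambda\,|\,i)\in\Gamma_{k-1}^+$ in $\mathbb R^{n-1}$ for every $i$. To prove it, I would use G{\aa}rding's theory. $\Gamma_k^+$ is the connected component of $\{\sigma_k>0\}$ containing $(1,\dots,1)$, and $\sigma_k$ is hyperbolic with respect to that direction. The derivative $\partial\sigma_k/\partial\lambda_i=\sigma_{k-1}(\lambda\,|\,i)$ is then hyperbolic too, and its positivity cone contains $\Gamma_k^+$ (Rolle/interlacing along lines parallel to the $e_i$ direction). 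A more elementary route works by induction on $j\le k$, using the identity above together with the Newton-type relation $\sum_i\sigma_{j-1}(\lambda\,|\,i)=(n-j+1)\sigma_{j-1}(\lambda)$ and a continuity argument. Start at $(1,\dots,1)$, where every $\sigma_{j}(\lambda\,|\,i)>0$, and move along a path inside the convex cone $\Gamma_k^+$. If some $\sigma_{j-1}(\lambda\,|\,i)$ were the first quantity to vanish, one shows it would force $\sigma_j(\lambda)\le 0$ or some $\sigma_{j'}(\lambda)\le0$ with $j'<j$, a contradiction. This is the step I expect to be the main obstacle; in the paper it is simply quoted from \cite{CNS1985}, and I would do the same.

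Granting the claim, I remove the largest entry $\lambda_1$. Then $(\lambda_2,\dots,\lambda_n)\in\Gamma_{k-1}^+$ in dimension $n-1$, and it is still ordered decreasingly. By induction on $k$, its $(k-1)$-th largest entry is positive, and that entry is $\lambda_k$. So $\lambda_k>0$.

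In fact the removal can be applied repeatedly: deleting $\lambda_1,\dots,\lambda_{k-1}$ leaves a vector in $\Gamma_1^+$ whose largest entry is $\lambda_k$. Positivity of its sum then gives $\lambda_k>0$ directly, which is the same induction written without the inductive hypothesis.
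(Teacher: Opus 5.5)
Your argument is correct, but it takes a different route from the paper, which does not prove this lemma and simply quotes it from Caffarelli--Nirenberg--Spruck. You instead derive it from the deletion property $\lambda\in\Gamma_k^+\Rightarrow(\lambda\,|\,i)\in\Gamma_{k-1}^+$, and then peel off the largest entries. The deletion property is just the ellipticity fact $\partial\sigma_j/\partial\lambda_i=\sigma_{j-1}(\lambda\,|\,i)>0$ on $\Gamma_j^+$, applied for each $2\le j\le k$. The citation is shorter. Your route instead reduces the lemma to the more commonly stated monotonicity of $\sigma_j$ on its cone. It also gives a stronger conclusion: $(\lambda_k,\dots,\lambda_n)\in\Gamma_1^+$, i.e.\ $\lambda_k+\cdots+\lambda_n>0$.

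One correction to your justification of the claim. The G{\aa}rding parenthetical is off. Since $\sigma_k(\lambda+te_i)=\sigma_k(\lambda)+t\,\sigma_{k-1}(\lambda\,|\,i)$ is affine in $t$, there is no Rolle or interlacing along $e_i$. G{\aa}rding's derivative theorem concerns directions in the open cone, and $e_i\in\partial\Gamma_k^+$ once $k\ge 2$, so that route needs a limiting argument.

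It is simpler to make the claim self-contained. Substitute $\sigma_j(\lambda)=\sigma_j(\lambda\,|\,i)+\lambda_i\sigma_{j-1}(\lambda\,|\,i)$ for $j=k-1,k$. The $\lambda_i$-terms cancel, and one gets
\[
\sigma_{k-1}(\lambda)\,\sigma_{k-1}(\lambda\,|\,i)=\sigma_k(\lambda)\,\sigma_{k-2}(\lambda\,|\,i)+\bigl(\sigma_{k-1}(\lambda\,|\,i)^2-\sigma_k(\lambda\,|\,i)\,\sigma_{k-2}(\lambda\,|\,i)\bigr).
\]
The bracket is nonnegative by Newton's inequality, which holds for every real vector. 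Also $\sigma_{k-2}(\lambda\,|\,i)>0$ by induction on $k$ (base case $\sigma_0=1$), and $\sigma_{k-1}(\lambda),\sigma_k(\lambda)>0$. Hence $\sigma_{k-1}(\lambda\,|\,i)>0$. This replaces your continuity sketch with a two-line argument and makes the whole proof independent of the citation.
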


\begin{lem} 
\label{lem:spectral-convex}
Let $C \subset \R^n$ be an open and convex set invariant under permutations, and define
\[
\mathcal S_C := \{A\in \Sym(n): \lambda(A)\in C\},
\] where \[
\Sym(n) := \{ A \in \R^{n\times n} : A^T = A \}.
\]
Then $\mathcal S_C$ is an open and convex subset of $\Sym(n)$.
\end{lem}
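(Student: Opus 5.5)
Openness follows from continuity of eigenvalues; convexity needs a separate argument, which I will take from Ky Fan's inequality for the ordered eigenvalue vector together with a convexity/majorization property of $C$.

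\emph{Openness.} Order eigenvalues decreasingly, $\lambda(A)=(\lambda_1(A)\ge\cdots\ge\lambda_n(A))$. By Weyl's inequality $|\lambda_i(A)-\lambda_i(B)|\le\|A-B\|_{op}$, so the map $\lambda:\Sym(n)\to\R^n$ is continuous. Since $C$ is permutation invariant, membership $\lambda(A)\in C$ does not depend on the ordering chosen. Hence $\mathcal S_C=\lambda^{-1}(C)$ is the preimage of an open set under a continuous map, and is therefore open.

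\emph{Convexity.} Let $A,B\in\mathcal S_C$, $t\in[0,1]$, and $M=tA+(1-t)B$. The key input is Ky Fan's maximum principle: for each $m$,
\[
\sum_{i\le m}\lambda_i(M)=\max_{\dim V=m}\operatorname{tr}(M|_V)\le t\sum_{i\le m}\lambda_i(A)+(1-t)\sum_{i\le m}\lambda_i(B),
\]
with equality when $m=n$, because the trace is linear. Thus $\lambda(M)$ is majorized by $\mu:=t\lambda(A)+(1-t)\lambda(B)$, and $\mu\in C$ by convexity of $C$.

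By the Hardy--Littlewood--P\'olya (Rado) theorem, majorization means $\lambda(M)$ lies in the convex hull of the permutations of $\mu$. Each permuted vector lies in $C$ by permutation invariance. Hence $\lambda(M)\in C$ by convexity of $C$, so $M\in\mathcal S_C$.

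\emph{Main obstacle.} The only nontrivial step is the majorization inequality $\lambda(tA+(1-t)B)\prec t\lambda(A)+(1-t)\lambda(B)$. I would cite Ky Fan's variational characterization of partial sums of eigenvalues rather than reprove it.

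As an alternative to citing Rado's theorem, one can use a Schur–Horn argument. Diagonalize $M=U\,\mathrm{diag}(\lambda(M))\,U^T$ and write $A'=U^TAU$, $B'=U^TBU$. Then $\lambda(M)=t\,\mathrm{diag}(A')+(1-t)\,\mathrm{diag}(B')$. By Schur, $\mathrm{diag}(A')$ is a convex combination of permutations of $\lambda(A)$, hence lies in $C$, and likewise $\mathrm{diag}(B')\in C$. Convexity of $C$ then gives $\lambda(M)\in C$ directly. This route is cleaner, and it is the one I would write up.
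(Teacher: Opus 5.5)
Your proof is correct. The paper does not prove this lemma; it only cites the Davis convexity theorem (with a pointer to Lewis). In its usual form that theorem says that if $f$ is convex and permutation-invariant on $\R^n$, then $A\mapsto f(\lambda(A))$ is convex on $\Sym(n)$. The set statement is the special case where $f$ is the indicator function of $C$.

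What you have written is essentially a self-contained proof of that special case. Openness comes from continuity of the ordered eigenvalue map (Weyl). Convexity comes from showing that $\lambda(tA+(1-t)B)$ lies in the convex hull of the permutations of $t\lambda(A)+(1-t)\lambda(B)$. In your Schur version, you instead write $\lambda(M)=t\,\mathrm{diag}(A')+(1-t)\,\mathrm{diag}(B')$, with each diagonal lying in the convex hull of the permutations of the corresponding eigenvalue vector.

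The citation is shorter and places the lemma within the general theory of spectral functions. Your argument is elementary, and it makes visible that openness of $C$ is used only for openness of $\mathcal S_C$, while convexity and permutation invariance alone give convexity.

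One small correction to your remark that the Schur route is an alternative to citing Rado. Write $A'=O\,\mathrm{diag}(\lambda(A))\,O^T$. Schur's observation gives $\mathrm{diag}(A')=P\lambda(A)$ with $P=(O_{ij}^2)$ doubly stochastic. You still need Birkhoff--von Neumann to write $P$ as a convex combination of permutation matrices. So that route avoids Ky Fan's maximum principle, but not the Birkhoff/Rado step.
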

This is a consequence of the Davis convexity theorem 
\cite{Davis1957}; see also \cite{Lewis2000}.

\begin{lem} \label{lem:morse-index-small} Let $(M,g)$ be a Riemannian manifold, and $U \subset M$ an open subset. 
Let $u$ be a smooth Morse function on $U$, and $S$ a symmetric two-tensor on $U$.   Suppose $p\in U$ is a critical point of $u$, and that 
\begin{equation}
    \label{convex-condition}
g^{-1} S \vert_p \in{\Gamma_{k}^{-}},
\qquad
g^{-1} \left( S +\nabla_g^{2} u \right)\vert_p \in\overline{\Gamma_{k}^{+}}.
\end{equation}

Then the Morse index of $u$ at $p$ is at most $n-k$.
\end{lem}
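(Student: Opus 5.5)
Let me set up the argument at the critical point $p$. Choose a $g$-orthonormal basis of $T_pM$ so that $g^{-1}$ becomes the identity and all tensors at $p$ become symmetric matrices. Write $S=S|_p$ and $H=\nabla_g^2u|_p$. The hypotheses \eqref{convex-condition} then say $\lambda(S)\in\Gamma_k^-$ and $\lambda(S+H)\in\overline{\Gamma_k^+}$. Since $p$ is a nondegenerate critical point, $H$ is invertible. Its Morse index $m$ is the number of negative eigenvalues of $H$, that is, the dimension of the negative eigenspace $V\subset T_pM$ of $H$. I will argue by contradiction, assuming $m\geq n-k+1$.

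The key step is to produce a matrix that is simultaneously in $\Gamma_k^-$ and in $\overline{\Gamma_k^+}$, which is impossible.

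1. By convexity (Lemma \ref{lem:spectral-convex} applied to $C=\Gamma_k^-$, which is open, convex and permutation invariant), together with the fact that $\Gamma_k^+$ is a convex cone, we get $S-tP\in\mathcal S_{\Gamma_k^-}$ for every $t\geq0$ and every positive semidefinite $P$. Indeed, $\Gamma_k^+ + \overline{\Gamma_n^+}\subset\Gamma_k^+$, and by monotonicity of eigenvalues under adding a positive semidefinite matrix this transfers to the matrix level.

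2. Use the min–max principle. By Lemma \ref{CNSlemma}, an element of $\overline{\Gamma_k^+}$ ordered decreasingly has $\lambda_k\ge0$; for the closure this follows by taking limits. Applying this to $S+H$, there is a $k$-dimensional subspace $W$ on which the quadratic form $S+H$ is $\geq0$. On the other hand, $-S\in\Gamma_k^+$ has $\lambda_k(-S)>0$, so $S$ has at least $k$ negative eigenvalues counted from the bottom. Since $\dim V+\dim W\geq n+1$, the intersection $V\cap W$ contains a unit vector $v$. For this $v$ we have $H(v,v)<0$, and hence $S(v,v)\ge -H(v,v)>0$.

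This single vector is not yet a contradiction, because $S$ may have up to $n-k$ positive eigenvalues. To get one, I refine the dimension count. Let $W'=W\cap V$, which has dimension at least $k+m-n\geq1$. On $W'$ the form $S$ is positive definite. I then want to show that $\lambda(S)\in\Gamma_k^-$ forces the positive index of $S$ to be at most $n-k$, which is exactly what Lemma \ref{CNSlemma} gives for $-S$. So $\dim W'\le n-k$. This is where the naive count stalls: it only yields $k+m-n\le n-k$, not a contradiction. So the real content must use the full cone structure and not merely signatures.

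The alternative route I would take is the following. Restrict everything to $V$, where $H<0$, so that $S+H\le S$ on $V$ in the sense of quadratic forms. Compressions of $\Gamma_k^+$ matrices to subspaces of dimension $d\ge k$ stay in the corresponding $\Gamma_k^+$ in dimension $d$; this is a standard interlacing fact. Hence $(S+H)|_V\in\overline{\Gamma_k^+}(V)$ and $-S|_V\in\Gamma_k^+(V)$, provided $\dim V=m\ge k$. Monotonicity then gives $S|_V\ge (S+H)|_V$, so $\lambda(S|_V)\in\overline{\Gamma_k^+}$ as well. Combined with $\lambda(-S|_V)\in\Gamma_k^+$ this yields $0=S|_V+(-S|_V)\in\Gamma_k^+$ by convexity, using that open cone plus closed cone lies in the open cone, which is a contradiction.

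The main obstacle is the case $n-k+1\le m<k$, where the compression argument does not apply directly. There I would instead compress to a $k$-dimensional subspace containing $V$ and handle the $H$-positive directions. This is the step I expect to need the most care, possibly via the eigenvalue bound $\lambda_k(S+H)\ge0$ applied to the $(S+H)$-block.
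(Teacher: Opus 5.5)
\textbf{There is a genuine gap.} Your alternative route rests on this claim: compressing a matrix in $\Gamma_k^+$ to a subspace of dimension $d\ge k$ keeps it in $\Gamma_k^+$ of that subspace. The claim is false. For $n=3$, $k=2$, the matrix $\mathrm{diag}(10,10,-1)$ has $\sigma_1=19$ and $\sigma_2=80$. Its compression to $\mathrm{span}(e_2,e_3)$ is $\mathrm{diag}(10,-1)$, which has $\sigma_2=-10$. What is true (by positivity of the Newton tensors on $\Gamma_k^+$) is weaker: compression to a subspace of codimension $j<k$ lands in $\Gamma_{k-j}^+$. So even your case $m\ge k$ is not established. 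The case $n-k+1\le m<k$ is explicitly left open. And, as you observe yourself, the signature/min--max count cannot close it.

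The missing idea is to add the two hypotheses directly at the matrix level instead of comparing signatures. By Lemma \ref{lem:spectral-convex}, $\mathcal S_{\Gamma_k^+}$ is an open convex cone. Hence $\overline{\mathcal S_{\Gamma_k^+}}+\mathcal S_{\Gamma_k^+}\subset\mathcal S_{\Gamma_k^+}$, and therefore
\[
g^{-1}\nabla_g^2u\big|_p = g^{-1}\bigl(S+\nabla_g^2u\bigr)\big|_p+\bigl(-g^{-1}S\big|_p\bigr)\in\Gamma_k^+ .
\]
Lemma \ref{CNSlemma} then gives $\lambda_k>0$ for the Hessian. So it has at least $k$ positive eigenvalues, hence at most $n-k$ negative ones. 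This is the paper's whole proof: no contradiction argument, no restriction to the negative eigenspace $V$, no case split, and nondegeneracy of $p$ is not even used.

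If you want to salvage your own strategy, use the correct codimension version of the compression fact. For $\dim V=m\ge n-k+1$, $(S+H)|_V$ lies in the closed $\Gamma^+_{k-n+m}$ cone of $V$ and $-S|_V$ lies in the open one. Hence their sum $H|_V$ lies in the open $\Gamma^+_{k-n+m}$ cone, which contradicts $H|_V<0$. This handles all $m\ge n-k+1$ at once, but it is strictly more work than the direct argument.
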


\begin{proof}

By Lemma~\ref{lem:spectral-convex}, the set
\[
\{A:\lambda(A)\in\Gamma_k^+\}
\]
is convex. Hence, by \ref{convex-condition},
\[
g^{-1}\nabla_g^2 u(p)
=
g^{-1}(S+\nabla_g^2 u)(p) + (-g^{-1}S(p))
\in \Gamma_k^+.
\]

Let the eigenvalues of $g^{-1} \nabla_g^{2}u(p)$ be ordered as $
\lambda_{1}\ge\lambda_{2}\ge\cdots\ge\lambda_{n}$.
By Lemma~\ref{CNSlemma},
\(
\lambda_{k}>0.
\)
It follows that $g^{-1}\nabla_g^{2}u(p)$ has at least $k$ positive eigenvalues,
and  at most $n-k$ negative eigenvalues.  This gives the stated bound on the Morse index.  
\end{proof} 
 
\begin{thm}
\label{thm:homology-vanishing-morse}
Let \(M\) be a smooth manifold, and let $f$
be a proper Morse function bounded from below. Suppose that every
critical point of \(f\) has Morse index at most \(N\). Then
\[
H_q(M;\mathbb Z)=0
\qquad
\text{for all } q>N .
\]
\end{thm}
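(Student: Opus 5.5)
We will prove this with the fundamental theorem of Morse theory for proper functions: $M$ is built from cells, one for each critical point, with the dimension of each cell equal to the Morse index. Since $f$ is proper and bounded below, each sublevel set $M^a=\{f\le a\}$ is compact. Hence each $M^a$ contains only finitely many critical points, because critical points of a Morse function are isolated. The set of critical values is also discrete in $\R$. We fix regular values $a_0<a_1<a_2<\cdots\to\infty$ with $M^{a_0}=\emptyset$ (possible since $f$ is bounded below) and with at most one critical value in each interval $(a_{j-1},a_j)$. By the standard theorems of Milnor, which apply because $f^{-1}[a_{j-1},a_j]$ is compact, $M^{a_j}$ is homotopy equivalent to $M^{a_{j-1}}$ with one $\lambda$-cell attached for each critical point in $f^{-1}(a_{j-1},a_j)$, where $\lambda$ is the index of that point.

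The key step is induction on $j$: we show that $H_q(M^{a_j};\mathbb Z)=0$ for all $q>N$. The base case $M^{a_0}=\emptyset$ is trivial. For the inductive step, write $X=M^{a_{j-1}}$ and $Y=X\cup e^{\lambda_1}\cup\cdots\cup e^{\lambda_r}$ with every $\lambda_i\le N$. Excision gives
\[
H_q(Y,X)\cong \bigoplus_i \tilde H_q(S^{\lambda_i}),
\]
which vanishes for $q>N$, and in particular for $q\ge N+1$. Consider the long exact sequence
\[
H_q(X)\to H_q(Y)\to H_q(Y,X).
\]
For $q>N$ both outer terms vanish, so $H_q(Y)=0$. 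Homotopy invariance then transfers this to $M^{a_j}$.

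Finally, we pass to $M$ itself. Because $f$ is proper, the sets $M^{a_j}$ exhaust $M$, and each compact subset of $M$ lies in some $M^{a_j}$. Singular homology commutes with this directed union, since every singular chain has compact support:
\[
H_q(M)\cong\varinjlim_j H_q(M^{a_j}).
\]
Each term in the limit is zero for $q>N$, so $H_q(M;\mathbb Z)=0$ for $q>N$.

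The main obstacle is the noncompact setting, which requires two things. First, the handle-attachment theorem must be applied to proper functions, which works because each slab $f^{-1}[a_{j-1},a_j]$ is compact. Second, homology of the direct limit must equal the direct limit of homologies. If $M$ has boundary, as in the intended applications where we restrict to $\Omega$, we will not have to deal with it, because the applications use open manifolds. Everything else is routine.
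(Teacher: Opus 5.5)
Your proof is correct and uses the same Morse-theoretic handle-decomposition argument as the paper. The paper cites the global statement that \(M\) has the homotopy type of a CW complex with one \(m\)-cell per index-\(m\) critical point. You instead attach cells one critical level at a time on the compact sublevel sets, use the long exact sequence of the pair to get \(H_q(M^{a_j};\mathbb Z)=0\) for \(q>N\), and pass to \(M\) by the compact-support direct limit, which avoids gluing infinitely many homotopy equivalences into a global CW structure.
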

 \begin{proof}
By the standard Morse-theoretic handle decomposition of sublevel sets
\cite[Part I, Section 3]{MilnorMorseTheory}, \(M\) has the homotopy type
of a CW complex with one cell of dimension \(m\) for each critical point
of Morse index \(m\). Since all critical indices are at most \(N\), this
CW complex has no cells in dimensions \(>N\), and therefore
\(H_q(M;\mathbb Z)=0\) for all \(q>N\).
\end{proof}

\begin{thm} \label{conThm}
Let $M^n$ be a compact connected oriented smooth manifold, possibly with
boundary, and let $\Sigma\subset M$ be a nonempty compact smooth embedded
submanifold of codimension at least one. If $\partial M\neq\emptyset$,
assume that $\partial M\subset \Sigma$.  Set $\Omega=M\setminus \Sigma.$
If
\[
H_{n-1}(\Omega;\mathbb Z)=0,
\]
then $\Sigma$ is connected.
\end{thm}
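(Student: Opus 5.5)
The plan is to replace $\Omega$ by a compact manifold with boundary, apply Lefschetz duality, and then count boundary components with a small graph argument. Suppose, for contradiction, that $\Sigma$ has $r\ge 2$ connected components $\Sigma_1,\dots,\Sigma_r$. Choose a small closed tubular neighborhood $N$ of $\Sigma$. Along components of $\partial M$ this is a closed collar, and along interior components it is a closed normal disk bundle. Take $N$ small enough that it splits as a disjoint union $N_1\sqcup\cdots\sqcup N_r$ with $N_j\supset\Sigma_j$.

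Set $W=M\setminus\operatorname{int}N$. Then $W$ is a compact smooth manifold with boundary, $\partial W=\partial N$ (the part of $\partial N$ lying in $\operatorname{int} M$), and $\Omega$ deformation retracts onto $W$. Hence $H_{n-1}(\Omega;\mathbb Z)\cong H_{n-1}(W;\mathbb Z)$. Since $M$ is oriented, so is $W$, and Lefschetz duality gives
\[
H_{n-1}(W;\mathbb Z)\cong H^{1}(W,\partial W;\mathbb Z).
\]
It therefore suffices to show $H^1(W,\partial W;\mathbb Z)\neq 0$ when $r\ge2$.

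Consider the exact sequence of the pair $(W,\partial W)$:
\[
H^0(W;\mathbb Z)\longrightarrow H^0(\partial W;\mathbb Z)\longrightarrow H^1(W,\partial W;\mathbb Z).
\]
Let $c$ be the number of components of $W$ and $b$ the number of components of $\partial W$. Then the first map is $\mathbb Z^c\to\mathbb Z^b$, so its cokernel is nonzero as soon as $b>c$. The cokernel injects into $H^1(W,\partial W)$, so $b>c$ finishes the proof.

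The main obstacle is this counting, because codimension-one components of $\Sigma$ can disconnect $M$, so $W$ need not be connected. I would handle it with a bipartite graph $G$:
\begin{itemize}
\item the vertices are the $c$ components of $W$ together with the $r$ components $N_j$;
\item the edges are the $b$ components of $\partial W$, each joining the component of $W$ it bounds to the unique $N_j$ containing it.
\end{itemize}
Each $N_j$ deformation retracts to the connected set $\Sigma_j$, so it is connected.

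Next I would check that $G$ is connected. We have $M=W\cup N$ with $W\cap N=\partial W$, and $M$ is connected. Any separation of the vertex set of $G$ with no edges between the two parts would give a decomposition of $M$ into two disjoint nonempty closed sets, which is impossible.

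A connected graph with $c+r$ vertices has at least $c+r-1$ edges. Hence $b\ge c+r-1\ge c+1$ when $r\ge 2$. This gives $H^1(W,\partial W;\mathbb Z)\neq0$, contradicting $H_{n-1}(\Omega;\mathbb Z)=0$. Therefore $\Sigma$ is connected.

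It remains to check that $\partial N_j\neq\emptyset$ for every $j$, which is needed for $N_j$ to be attached by at least one edge. This holds because $\Sigma_j$ has codimension at least one, so $N_j\neq M$.
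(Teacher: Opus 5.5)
Your proof is correct, and it takes a genuinely different route from the paper's.

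The paper works in top degree. Excision and the retraction $N_i\setminus\Sigma_i\simeq\partial N_i$ give $H_n(M,\Omega)\cong\bigoplus_i H_n(N_i,\partial N_i)\cong\mathbb Z^{c}$. The exact sequence $H_n(M)\to H_n(M,\Omega)\to H_{n-1}(\Omega)=0$ then forces a surjection $\mathbb Z\to\mathbb Z^{c}$, so $c=1$. The case $\partial M\neq\emptyset$ is reduced to the closed case by doubling $M$ along $\partial M$.

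You instead pass to the compact complement $W$ and dualize to $H^1(W,\partial W)$. You then reduce to $b>c$ via the degree-zero part of the cohomology sequence of $(W,\partial W)$, and get that inequality from a spanning-tree count. All steps check out, including connectivity of your graph: distinct pieces meet only along components of $\partial W$, so a separation of the vertices would split $M$ into two disjoint nonempty closed sets.

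What each route buys:
\begin{itemize}
\item The paper's comparison with $H_n(M)\cong\mathbb Z$ automatically absorbs the possibility that codimension-one components of $\Sigma$ disconnect $M$, so no bookkeeping of the components of $W$ is needed. The cost is that $M$ must be closed, hence the doubling step, which carries its own small checks: the double is oriented, and $\widehat\Sigma$ is again a submanifold.
\item Your argument treats $\partial M\subset\Sigma$ directly through collars, with no doubling. The cost is the combinatorial step you rightly identified as the crux.
\item As a bonus, your count gives $\operatorname{rank}H_{n-1}(\Omega)\ge r-1$ even when $\partial M\neq\emptyset$. The doubled sequence only bounds the rank of $H_{n-1}(\Omega)^{\oplus 2}$.
\end{itemize}

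Two minor points:
\begin{itemize}
\item Your closing check that $\partial N_j\neq\emptyset$ is already implied by connectivity of the graph when $r\ge 2$.
\item Your collar/disk-bundle dichotomy tacitly uses that a component of $\Sigma$ meeting $\partial M$ is a whole component of $\partial M$. This holds for a boundaryless embedded submanifold containing $\partial M$, and the paper's doubling argument uses the same fact implicitly.
\end{itemize}
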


\begin{proof}
We first assume that $\partial M=\emptyset$. Write the connected components
of $\Sigma$ as
\begin{equation}
    \label{sum1}
\Sigma=\Sigma_1\sqcup\cdots\sqcup \Sigma_c .
\end{equation}
By the tubular
neighborhood theorem (
\cite[Chapter 4]{Hirsch} or \cite[Chapter 6]{LeeSmooth} ), there exist pairwise disjoint closed tubular neighborhoods
\(
N_i\subset M
\)
of the components $\Sigma_i$ such that each $N_i$ is a disk bundle of
positive rank over $\Sigma_i$. Set
\begin{equation}
    \label{sum2}N=\bigsqcup_{i=1}^c N_i .
\end{equation}
The radial projections in the normal
disk bundle give deformation retractions and  homotopy equivalences
\begin{equation} 
    \label{homotopy-equiv}
    N_i\setminus \Sigma_i \simeq \partial N_i .
\end{equation}

Since $\Sigma\subset \operatorname{int}N$, excision gives
\[
H_n(M,\Omega;\mathbb Z)
\cong
H_n(N,N\setminus \Sigma;\mathbb Z);
\]
see \cite[Section 2.1]{hatcher2002algebraic}. Using the decomposition \eqref{sum1} and \eqref{sum2},
we obtain
\begin{equation}\label{step1}
H_n(N,N\setminus \Sigma;\mathbb Z)
\cong
\bigoplus_{i=1}^c H_n(N_i,N_i\setminus \Sigma_i;\mathbb Z).
\end{equation}
It follows from \eqref{homotopy-equiv} that
\begin{equation}\label{step2}
H_n(N_i,N_i\setminus \Sigma_i;\mathbb Z)
\cong
H_n(N_i,\partial N_i;\mathbb Z).
\end{equation}
Since $N_i$ is a compact connected oriented $n$-manifold with boundary,
its relative fundamental class generates
\begin{equation}
    \label{step0}H_n(N_i,\partial N_i;\mathbb Z)\cong \mathbb Z.
\end{equation}
Equivalently, this is the top-dimensional case of
Poincar\'e--Lefschetz duality; see \cite[Section 3.3]{hatcher2002algebraic}. Therefore, from \eqref{step1} and \eqref{step2},
\begin{equation}\label{z-c}
    H_n(M,\Omega;\mathbb Z)
\cong
\mathbb Z^c .
\end{equation}

Now consider the long exact sequence of the pair $(M,\Omega)$:
\begin{equation}
    \label{exact}
H_n(M;\mathbb Z)
\longrightarrow
H_n(M,\Omega;\mathbb Z)
\longrightarrow
H_{n-1}(\Omega;\mathbb Z). 
\end{equation}
Since the last term equals zero by hypothesis,   the first map in \eqref{exact}
is surjective. Taking into account \eqref{step0} and \eqref{z-c}, this surjectivity is possible only if $c=1$. Thus, $\Sigma$ is connected in the closed
case.

We now consider the case $\partial M\neq\emptyset$. Let
\(
\widehat M
\)
denote the double of $M$ along $\partial M$, and let
\(
\widehat\Sigma\subset \widehat M
\)
denote the double of $\Sigma$. The collar neighborhood theorem again gives the
smooth structure on the double; see \cite[Chapter 9]{LeeSmooth}. Since
\(
\partial M\subset \Sigma,
\)
the complement of $\widehat\Sigma$ in $\widehat M$ is the disjoint union of
two copies of $\Omega$:
\[
\widehat M\setminus \widehat\Sigma
=
\Omega_+\sqcup \Omega_- .
\]
Therefore
\[
H_{n-1}(\widehat M\setminus \widehat\Sigma;\mathbb Z)
\cong
H_{n-1}(\Omega;\mathbb Z)
\oplus
H_{n-1}(\Omega;\mathbb Z)
=
0 .
\]
The manifold $\widehat M$ is closed, connected, and oriented, and
$\widehat\Sigma$ is a compact smooth embedded submanifold of codimension
at least one. Applying the closed case to the pair
\(
(\widehat M,\widehat\Sigma),
\)
we conclude that $\widehat\Sigma$ is connected. Therefore $\Sigma$ is connected.
\end{proof}

\bigskip 

We remark that the assumptions on $\Sigma$ in the previous theorem can be weakened considerably.   However, the precise statement is somewhat technical and will not be pursued here.

\section{Negatively Admissible Metrics and Topology} \label{AdmissibleSec}

Throughout this section, \((M^n,g)\), \(n\geq 3\), is a compact connected
smooth Riemannian manifold, possibly with boundary, and
\(\Sigma\subset M\) is a nonempty compact subset. If \(\partial M\neq
\emptyset\), we assume that \(\partial M\subset \Sigma\). We set
\[
\Omega:=M\setminus \Sigma .
\]
Thus \(\Omega\) is a smooth open manifold without boundary.

To conclude that \(\Sigma\) is connected, we shall later impose the
additional hypotheses of Theorem \ref{conThm}; namely, orientability of \(M\) and
the appropriate smooth embedded-submanifold assumption on \(\Sigma\).

Our first result is a Morse approximation lemma in the admissible setting:

\begin{lem} 
\label{lem:morse-approx-negative}
Suppose \(u\in C^{\infty}(\Omega)\) satisfies \vskip.1in 

$(i)$  $\lambda(g_u^{-1} A_u) \in \Gamma_k^{-}$ at each point in $\Omega$, and \vskip.1in 

$(ii)$ $u$ is  nondegenerate on $\Omega$.  \vskip.2in

Then, for every sufficiently large regular value \(\alpha\) of \(u\),
there exists a smooth function \(v\in C^\infty(\Omega)\) such that:
\begin{enumerate}
\item \(v=u\) on a neighborhood of \(\Omega\setminus\Omega_\alpha\), where
\[
\Omega_\alpha:=\{x\in\Omega:u(x)\le \alpha\};
\]
in particular,
\[
v(x)\to+\infty
\qquad\text{as } d_g(x,\Sigma)\to0 .
\]

\item \(v\) is a Morse function on \(\Omega\).

\item All critical points of \(v\) lie in the compact set \(\Omega_\alpha\).

\item For every \(\varepsilon>0\), the approximation may be chosen so that
\[
\|v-u\|_{C^2(\Omega_\alpha)}<\varepsilon .
\]

\item The negative \(k\)-admissibility condition is preserved: $\lambda(g_v^{-1} A_v)\in\Gamma_k^{-}(\Omega).$
\end{enumerate}
\end{lem}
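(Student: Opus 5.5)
We use the nondegeneracy of $u$ to pin down the critical points. Then we perturb $u$ by a small cutoff modification, using Theorem~\ref{dense}, and rely on the openness of the condition $\lambda(g_v^{-1}A_v)\in\Gamma_k^-$ to keep admissibility.

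\textbf{Step 1: confining the critical points.} By condition $(i)$ of Definition~\ref{nondegenerate}, $u\to+\infty$ at $\Sigma$. Hence there is $\alpha_0$ such that $\{u\ge \alpha_0\}\subset\{0<d_g(x,\Sigma)<\varepsilon_0\}$. On this set $|\nabla u|>0$ by condition $(ii)$, so every critical point of $u$ lies in the interior of $\Omega_{\alpha_0}$.

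Also, each sublevel set $\Omega_\alpha$ is compact. Indeed, $\Omega_\alpha$ is closed in $\Omega$ and stays a positive distance from $\Sigma$, since $u\to\infty$ there. Since $M$ is compact, $\Omega_\alpha$ is a compact subset of $\Omega$.

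Fix a regular value $\alpha>\alpha_0+2$ of $u$. Choose values $\alpha_0<\beta_1<\beta_2<\alpha$ and a cutoff $\chi\in C^\infty_c(\Omega)$ with $\chi\equiv1$ on $\Omega_{\beta_1}$ and $\operatorname{supp}\chi\subset\operatorname{int}\Omega_{\beta_2}$. Set $K=\Omega_{\beta_2}$, which is compact.

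\textbf{Step 2: Morse approximation and gluing.} Apply Theorem~\ref{dense} on $\Omega$ with this $K$ and $m=2$. This gives a Morse function $h$ with $\|h-u\|_{C^2(K)}<\delta$. Define
\[
v=u+\chi\,(h-u).
\]
Then $v=u$ outside $\operatorname{supp}\chi$, which is a neighborhood of $\Omega\setminus\Omega_\alpha$. This gives property~(1). We also have $\|v-u\|_{C^2(\Omega_\alpha)}\le C_\chi\delta$, where $C_\chi$ depends only on the $C^2$ norm of $\chi$. This gives property~(4).

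Now locate the critical points of $v$.
\begin{itemize}
\item On $\{u\ge\beta_1\}\cap\Omega_{\beta_2}$, the compact set of Step 1 carries a bound $|\nabla u|\ge c>0$. Taking $\delta$ small gives $|\nabla v|\ge c/2$ there, so $v$ has no critical points in this region.
\item Outside $\Omega_{\beta_2}$ we have $v=u$, and $u$ has no critical points there.
\item Hence all critical points of $v$ lie in $\Omega_{\beta_1}$. There $v=h$, which is Morse.
\end{itemize}
This gives properties~(2) and~(3).

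\textbf{Step 3: preserving negative admissibility.} This is the step requiring care, since the condition must hold on all of $\Omega$, which is noncompact. Outside $\operatorname{supp}\chi$ we have $v=u$, so the condition holds there automatically.

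On the compact set $\Omega_\alpha$, consider the map
\[
x\mapsto g^{-1}\bigl(A-\nabla^2u+du\otimes du-\tfrac12|du|^2g\bigr)(x),
\]
which is continuous. By \eqref{Agchange}, $\lambda(g_u^{-1}A_u)\in\Gamma_k^-$ is equivalent to $\lambda$ of this expression lying in $\Gamma_k^-$, because the positive factor $e^{-2u}$ does not affect membership in the cone. The set $\mathcal S_{\Gamma_k^-}$ is open by Lemma~\ref{lem:spectral-convex}. So the image of $\Omega_\alpha$ under this map is a compact subset of an open set, and it has a uniform distance $\eta>0$ from the complement.

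The same expression with $v$ in place of $u$ depends continuously on the 2-jet. Since $\|v-u\|_{C^2(\Omega_\alpha)}\le C_\chi\delta$, choosing $\delta$ small (relative to $\eta$ and any given $\varepsilon$) keeps the expression for $v$ within $\eta$ of that for $u$. Hence $\lambda(g_v^{-1}A_v)\in\Gamma_k^-$ on $\Omega_\alpha$, giving property~(5).

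The main obstacle is this uniformity over a noncompact domain. It is handled by confining the modification to the compact set $\Omega_\alpha$, which Step 1 provides through nondegeneracy.
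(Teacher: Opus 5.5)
Your argument is correct and is essentially the paper's proof: a Morse approximation $h$ from Theorem~\ref{dense} is glued to $u$ by a cutoff whose transition zone lies where $|\nabla u|\ge c>0$, so every critical point of $v$ is a critical point of $h$, and negative admissibility is preserved by the openness of $\Gamma_k^-$ together with the compactness of $\Omega_\alpha$. The only difference is cosmetic: you place the transition zone between the interior levels $\beta_1<\beta_2<\alpha$ rather than in a collar of $\partial\Omega_\alpha$. Also, the inclusion $\{u\ge\alpha_0\}\subset\{d_g(\cdot,\Sigma)<\varepsilon_0\}$ really follows from the boundedness of $u$ on the compact set $\{d_g(\cdot,\Sigma)\ge\varepsilon_0\}\subset\Omega$, not from $u\to+\infty$.
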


\begin{proof}
Since \(u\) is  nondegenerate in the sense of Definition~\ref{nondegenerate}, $u$ has no critical points near $\Sigma$. Choose a sufficiently large
regular value \(\alpha\)  such that $$ \Omega\setminus\Omega_\alpha \subset\{d_g(x,\Sigma)<\epsilon_0\}.$$ Thus,
\(
du\neq0\) on $\Omega\setminus\Omega_\alpha$. Furthermore, there exists a collar
neighborhood \(U\) of \(\partial\Omega_\alpha\) and a constant \(c>0\) such
that
\[
|\nabla u|\ge c
\qquad\text{on }U.
\]
Choose open sets
\[
\partial\Omega_\alpha\subset U_1\Subset U_2\Subset U
\]
and a cutoff function \(\eta\in C^\infty(\Omega)\) satisfying
\[
0\le\eta\le1,\qquad
\eta\equiv0 \ \text{on } U_1\cup(\Omega\setminus\Omega_\alpha),
\qquad
\eta\equiv1 \ \text{on } \Omega_\alpha\setminus U_2.
\]
By Theorem~\ref{dense}, applied with \(K=\Omega_\alpha\) and \(m=2\),
for every \(\delta>0\) there exists a Morse function
\(
h\in C^\infty(\Omega)
\)
such that
\(
\|h-u\|_{C^2(\Omega_\alpha)}<\delta.
\)
Define
\[
v:=\eta h+(1-\eta)u
\qquad\text{on }\Omega.
\]
Since \(v-u=\eta(h-u)\), there exists a constant \(C_\eta>0\),
depending only on \(\eta\), such that
\[
\|v-u\|_{C^2(\Omega_\alpha)}
\le
C_\eta\,\|h-u\|_{C^2(\Omega_\alpha)}
<
C_\eta\delta.
\]
Choosing \(\delta<\varepsilon/C_\eta\), we obtain
\[
\|v-u\|_{C^2(\Omega_\alpha)}<\varepsilon.
\]

Since \(|\nabla u|\ge c\) on \(U\), choose \(\delta>0\) sufficiently
small so that \(v\) has no critical points in \(U\).
Since \(v=u\) on \(U_1\cup(\Omega\setminus\Omega_\alpha)\), and \(u\) has
no critical points there, every critical point of \(v\) lies in
\(
\Omega_\alpha\setminus U_2.
\)
On this set we have \(v=h\). Therefore every critical point of \(v\) is a
critical point of \(h\), and hence is nondegenerate because \(h\) is Morse.
Thus \(v\) is a Morse function on \(\Omega\), and all of its critical
points lie in the compact set \(\Omega_\alpha\).  It remains to check that the negative \(k\)-admissibility condition is
preserved. Since \(\Omega_\alpha\) is compact and
\[
\lambda(g_u^{-1}A_u)\in \Gamma_k^-
\qquad\text{on }\Omega_\alpha,
\]
the openness of \(\Gamma_k^-\), together with the continuity of the map
\[
w\longmapsto \lambda(g_w^{-1}A_w)
\]
in the \(C^2\)-topology, implies that, after choosing \(\delta>0\) smaller if
necessary,
\[
\lambda(g_v^{-1}A_v)\in \Gamma_k^-
\qquad\text{on }\Omega_\alpha.
\]
On \(\Omega\setminus\Omega_\alpha\), we have \(v=u\), and hence the same
condition holds there. Therefore
\[
\lambda(g_v^{-1}A_v)\in \Gamma_k^-
\qquad\text{on}\ \Omega.
\]
This proves the lemma.
\end{proof}

We now combine Lemma~\ref{lem:morse-approx-negative} with
Theorem~\ref{thm:homology-vanishing-morse}.

\begin{thm} 
\label{thm:vanishing-negative}
Assume \(\Omega=M\setminus\Sigma\) is connected, and that on $\Omega$, $\lambda(g^{-1}A_g)\in\overline{\Gamma_k^+}$. Suppose \(u\in C^{\infty}(\Omega)\) satisfies \vskip.1in 

$(i)$  $\lambda(g_u^{-1} A_u) \in \Gamma_k^{-}$ at each point of $\Omega$, and \vskip.1in 

$(ii)$ $u$ is  nondegenerate on $\Omega$. \vskip.1in 

Then
\[
H_q(\Omega;\mathbb Z)=0
\qquad
\text{for all } q>n-k .
\]
\end{thm}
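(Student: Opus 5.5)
We apply Lemma~\ref{lem:morse-approx-negative} to replace $u$ by a Morse function $v$ that keeps all the relevant properties. We then bound the Morse index of $v$ at each critical point using Lemma~\ref{lem:morse-index-small}, and conclude with Theorem~\ref{thm:homology-vanishing-morse}.

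\textbf{Step 1: Morse approximation.} Fix a large regular value $\alpha$ of $u$. Lemma~\ref{lem:morse-approx-negative} gives $v\in C^\infty(\Omega)$ with the following properties: $v$ is Morse on $\Omega$, $v=u$ outside a neighborhood of $\Omega_\alpha$, $v(x)\to+\infty$ as $x\to\Sigma$, all critical points of $v$ lie in the compact set $\Omega_\alpha$, and $\lambda(g_v^{-1}A_v)\in\Gamma_k^-$ on $\Omega$.

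\textbf{Step 2: properness.} We check that $v$ is proper and bounded below on $\Omega$. Since $v\to+\infty$ at $\Sigma$, each sublevel set $\{v\le c\}$ is a closed subset of $M$ disjoint from $\Sigma$. It is therefore a compact subset of $\Omega$. Continuity on these compact sublevels, together with the blow-up at $\Sigma$, gives a lower bound for $v$.

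\textbf{Step 3: index bound at a critical point.} Let $p$ be a critical point of $v$. At $p$ we have $dv=0$, so \eqref{Achange} reduces to
\[
A_v(p)=A(p)-\nabla^2 v(p).
\]
Set $S:=A_v$, viewed as a symmetric two-tensor on $\Omega$. Since $e^{-2v}>0$, the negative admissibility $\lambda(g_v^{-1}A_v)\in\Gamma_k^-$ gives $g^{-1}S|_p\in\Gamma_k^-$, because the cone condition is invariant under positive scaling. Moreover $S+\nabla^2 v=A$ at $p$, and by hypothesis $g^{-1}A\in\overline{\Gamma_k^+}$. Thus \eqref{convex-condition} holds, and Lemma~\ref{lem:morse-index-small} shows that the Morse index of $v$ at $p$ is at most $n-k$.

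One point needs care here. The proof of Lemma~\ref{lem:morse-index-small} adds an element of $\overline{\Gamma_k^+}$ to an element of $\Gamma_k^+$. The sum lies in $\Gamma_k^+$ because the cone is open and convex, which follows from Lemma~\ref{lem:spectral-convex} together with the cone property ($\overline{\Gamma}+\Gamma\subset\Gamma$).

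\textbf{Step 4: conclusion.} Theorem~\ref{thm:homology-vanishing-morse}, applied to the proper Morse function $v$ on the manifold without boundary $\Omega$ with $N=n-k$, yields $H_q(\Omega;\mathbb Z)=0$ for all $q>n-k$. Connectedness of $\Omega$ is not really needed for this step.

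The only potential obstacle is Step 2, namely making sure that the modified function $v$ keeps the blow-up and hence properness. This is guaranteed by item~(1) of Lemma~\ref{lem:morse-approx-negative}, since $v=u$ near $\Sigma$.
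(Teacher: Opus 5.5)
Your proposal is correct and follows the paper's proof step for step: Morse approximation via Lemma~\ref{lem:morse-approx-negative}, properness from the blow-up at $\Sigma$, the index bound from Lemma~\ref{lem:morse-index-small} with $S=A_v$ at critical points (where $dv=0$), and the conclusion from Theorem~\ref{thm:homology-vanishing-morse}. Your remark that $\overline{\Gamma_k^+}+\Gamma_k^+\subset\Gamma_k^+$ needs openness and the cone property as well as convexity, not convexity alone, correctly fills in a step that the paper's proof of Lemma~\ref{lem:morse-index-small} passes over quickly.
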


\begin{proof}
By Lemma~\ref{lem:morse-approx-negative}, after replacing \(u\) by a
smooth function \(v\in C^\infty(\Omega)\), we may assume that \(v\) is a
Morse function on \(\Omega\) with \(v=u\) near \(\Sigma\).  Furthermore, $\lambda(g_v^{-1} A_v)\in\Gamma_k^-(\Omega)$, and all critical points of \(v\) lie in a compact subset of
\(\Omega\).  Since \(v=u\) near \(\Sigma\) and \(u\) is  nondegenerate, we have $v(x)\to+\infty$ as $d_g(x,\Sigma)\to0$, and $v$ is bounded from below.
It follows that \(v\) is a proper Morse exhaustion of \(\Omega\). In particular, its sublevel sets
\[
\Omega^b:=\{x\in\Omega:v(x)\le b\}
\]
are compactly contained in \(\Omega\) and exhaust \(\Omega\).

Let \(p\) be a critical point of \(v\). Since \(dv(p)=0\), the
conformal transformation formula for the Schouten tensor gives
\[
A_v(p)=A_g(p)-\nabla_g^2 v(p).
\]
Equivalently,
\[
A_v(p)+\nabla_g^2v(p)=A_g(p).
\]
We now compare the cone conditions using the background metric \(g\).
Since \(g_v=e^{2v}g\), we have
\[
g_v^{-1}A_v=e^{-2v}g^{-1}A_v.
\]
Because \(\Gamma_k^-\) is a cone and \(e^{-2v(p)}>0\), the condition
\(
\lambda(g_v^{-1}A_v)(p)\in \Gamma_k^-
\)
is equivalent to
\(
\lambda(g^{-1}A_v)(p)\in \Gamma_k^-.
\)
On the other hand, by   hypothesis we have
\[
\lambda(g^{-1}A_g)(p)\in \overline{\Gamma_k^+}.
\]
Applying Lemma~\ref{lem:morse-index-small} with \(S=A_v\), we have
\[
g^{-1}S\big|_p=g^{-1}A_v\big|_p\in\Gamma_k^-,
\]
and
\[
g^{-1}\bigl(S+\nabla_g^2v\bigr)\big|_p
=
g^{-1}A_g\big|_p
\in\overline{\Gamma_k^+}.
\]
Therefore the Morse index of \(v\) at \(p\) is at most \(n-k\).  We conclude that every critical point of the proper Morse function \(v\) has
Morse index at most \(n-k\). By
Theorem~\ref{thm:homology-vanishing-morse},
\[
H_q(\Omega;\mathbb Z)=0
\qquad
\text{for all } q>n-k .
\]
This proves the theorem.
\end{proof}

Note that Theorem \ref{Thm1} is an immediate corollary of Theorem \ref{thm:vanishing-negative}.  As another corollary, we have the following connectedness result under negative \(k\)-admissibility: 

\begin{cor}  
\label{cor:singular-set-connected-negative}   Under the assumptions of Theorem \ref{thm:vanishing-negative}, if $M$ is oriented, $\Sigma$ satisfies the condition of Theorem~\ref{conThm}, and
\(k\ge 2\), then \(\Sigma\) is connected.
\end{cor}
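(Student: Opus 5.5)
The plan is to combine Theorem~\ref{thm:vanishing-negative} with Theorem~\ref{conThm}. The first gives vanishing of $H_{n-1}(\Omega;\mathbb Z)$ from the cone conditions. The second turns that vanishing into connectedness of $\Sigma$.

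First, I apply Theorem~\ref{thm:vanishing-negative}. Under its hypotheses ($\Omega$ connected, $\lambda(g^{-1}A_g)\in\overline{\Gamma_k^+}$ on $\Omega$, and $u$ negatively $k$-admissible and nondegenerate), we get $H_q(\Omega;\mathbb Z)=0$ for all $q>n-k$. Since $k\ge 2$, we have $n-1>n-k$, so in particular
\[
H_{n-1}(\Omega;\mathbb Z)=0 .
\]
This is the only place the assumption $k\ge 2$ enters. When $k=1$ the range $q>n-1$ misses degree $n-1$, which is consistent with the classical Loewner--Nirenberg problem on annuli, where $\Sigma$ is disconnected.

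Second, I check the hypotheses of Theorem~\ref{conThm}. These are:
\begin{itemize}
\item $M$ is compact, connected and oriented (orientation is assumed in the corollary; compactness and connectedness are standing assumptions of Section~\ref{AdmissibleSec});
\item $\Sigma$ is a nonempty compact smooth embedded submanifold of codimension at least one (assumed via ``$\Sigma$ satisfies the condition of Theorem~\ref{conThm}'');
\item $\partial M\subset\Sigma$ whenever $\partial M\neq\emptyset$ (a standing assumption).
\end{itemize}
With $H_{n-1}(\Omega;\mathbb Z)=0$ in hand, Theorem~\ref{conThm} yields that $\Sigma$ is connected.

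I do not expect a genuine obstacle, since the argument is a direct chaining of two earlier results. The one point needing care is bookkeeping about the boundary. When $\partial M\neq\emptyset$, the proof of Theorem~\ref{conThm} doubles $M$, so it requires $\Sigma$ to contain all of $\partial M$. It also requires the doubled set $\widehat\Sigma$ to be a smooth submanifold of $\widehat M$, which the corollary takes as part of the assumption that $\Sigma$ satisfies the condition of Theorem~\ref{conThm}. I would state this explicitly rather than leave it implicit.
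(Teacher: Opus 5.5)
Your proposal is correct and matches the paper, whose proof simply chains Theorem~\ref{thm:vanishing-negative} with Theorem~\ref{conThm}. As you observe, $k\ge 2$ gives $n-1>n-k$, hence $H_{n-1}(\Omega;\mathbb Z)=0$, and Theorem~\ref{conThm} then yields that $\Sigma$ is connected.
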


The proof follows from Theorems~\ref{thm:vanishing-negative} and~\ref{conThm}.

 
\end{document}